\documentclass[12pt]{amsart}

\usepackage{amsmath,amssymb,amscd,xcolor}
\usepackage{fullpage,enumitem}
\usepackage{xypic,units}
\xyoption{curve}

\usepackage{hyperref}
\hypersetup{colorlinks,
	linkcolor=green!50!black,
	citecolor=blue!70!black,
	urlcolor=red!50!black
}

\newcommand\seq{\subseteq}
\newcommand\ssm{\smallsetminus}

\newcommand\sub\subset

\newcommand\A{\ensuremath{\mathbb{A}}}
\newcommand\F{\ensuremath{\mathbb{F}}}
\newcommand\N{\ensuremath{\mathbb{N}}}
\newcommand\Q{\ensuremath{\mathbb{Q}}}
\newcommand\Z{\ensuremath{\mathbb{Z}}}

\newcommand\bmu{{\boldsymbol\mu}}

\newcommand\calD{\mathcal{D}}
\newcommand\calE{\mathcal{E}}
\newcommand\calP{\mathcal{P}}
\newcommand\calS{\mathcal{S}}

\newcommand\dashto\dashrightarrow
\newcommand\longto\longrightarrow

\newcommand\lact\curvearrowright

\newcommand\noi\noindent

\DeclareMathOperator\Aut{Aut}

\DeclareMathOperator\Gal{Gal}

\DeclareMathOperator\genus{genus}
\DeclareMathOperator\id{od}
\DeclareMathOperator\lcm{lcm}
\DeclareMathOperator\ord{ord}
\DeclareMathOperator\rad{rad}

\newcommand\Fp{\F_p}
\newcommand\Fq{\F_q}

\newcommand\Fqn{\F_{q^n}}

\newcommand\Fqbar{\bar\F_q}

\newcommand\Fqtimes{\F_q^\times}

\newcommand\Fell{\F_\ell}
\newcommand\Fellbar{\bar\F_\ell}
\newcommand\Qell{\Q_\ell}
\newcommand\Zell{\Z_\ell}

\newcommand\Flambda{\F_\lambda}
\newcommand\Zlambda{\Z_\lambda}

\newcommand\Fr{\F_r}

\newcommand{\ms}{\medskip}

\newtheorem{theorem}{Theorem}
\newtheorem{prop}[theorem]{Proposition}

\setlist[enumerate]{itemsep=0.05in}

\begin{document}


\counterwithin{equation}{section} 
\numberwithin{theorem}{section}

\renewcommand{\theequation}{\thesection.\arabic{equation}}


\title{Hasse-Weil Zeta Functions Modulo a Prime}
\author{Chris Hall}

\maketitle
\centerline{\today}


\begin{abstract}
Let $\Fq$ be a finite field of characteristic $p$ and $\pi\colon Y\to X$ be a finite $\Fq$-morphism of separated $\Fq$-schemes of finite type.  Suppose $\pi$ is generically Galois with group $G$ of prime order $r\neq p$.  We determine the mod-$r$ reduction of the zeta function of $Y$ in terms of the zeta function of $X$ and the branch locus $Z\sub X$ of $\pi$.  We give applications to curves and to numerators of hyperelliptic/superelliptic curves.
\end{abstract}


\section{Theorem Statement}

Let $p\in\N$ be a prime and $\Fq$ be a finite extension of $\Fp$.
For each separated $\Fq$-scheme $X$ of finite type, let $|X|$ be the set of closed points, that is, the $\Gal(\Fqbar/\Fq)$-orbits of $X(\Fqbar)$, and
$$
	\zeta(X,T)
	:=
	\exp\left(\sum_{n=1}^\infty|X(\Fqn)|\frac{T^n}{n}\right)
	= \prod_{x\in|X|}\det(1-T^{\deg(x)})^{-1}
	\in 1+T\cdot\Z[[T]]
$$
be the (Hasse-Weil) zeta function of $X$.  Recall $\zeta(X,T)\in\Q(T)$ (see \cite{dwork1960rationality} and \cite{grothendieck1966formule}).

\ms
\begin{theorem}
Let $\Fq$ be a finite field of characteristic $p$ and $\pi\colon Y\to X$ be a finite $\Fq$-morphism of separated $\Fq$-schemes of finite type.  Let $Z\sub X$ be the branch locus of $\pi$.  Suppose $\pi$ is generically Galois with group $G$ of prime order $r\neq p$.
Then
\begin{equation}\label{eqn:main-identity}
	\zeta(Y,T)
	\equiv
	\zeta(X,T)^r
	\zeta(Z,T)^{1-r}
	\bmod r.
\end{equation}
\end{theorem}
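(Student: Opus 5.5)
The plan is to work with point counts instead of zeta functions. Write $\zeta(X,T)=\exp\bigl(\sum_n N_n(X)T^n/n\bigr)$. Since the statement is multiplicative, and every series involved lies in $1+T\cdot\Z[[T]]$, it is natural to use the Euler product
\[
\zeta(X,T)=\prod_{x\in|X|}(1-T^{\deg x})^{-1}.
\]
This lets us compare $\zeta(Y,T)$ and $\zeta(X,T)^r\zeta(Z,T)^{1-r}$ fiber by fiber over closed points of $X$. Concretely, I would show that for each closed point $x\in|X|$, the local factor
\[
\prod_{y\mapsto x}(1-T^{\deg y})^{-1}
\]
is congruent mod $r$ to $(1-T^{\deg x})^{-r}$ when $x\notin Z$, and to $(1-T^{\deg x})^{-1}$ when $x\in Z$. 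The product over all $x$ then converges $T$-adically, and since congruence mod $r$ in $1+T\Z[[T]]$ is preserved under infinite convergent products, the theorem follows.

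\textbf{Case $x\notin Z$.} Over $X\ssm Z$, $\pi$ is finite étale Galois with group $G\cong\Z/r$, so $\pi^{-1}(x)$ is a $G$-torsor over $\kappa(x)=\F_{q^d}$, where $d=\deg x$. Frobenius acts on the $r$ geometric points through an element of $G$. Either it is trivial, giving $r$ points of degree $d$, or it has order $r$, giving one point of degree $rd$. In the first case the local factor is exactly $(1-T^d)^{-r}$. In the second it is $(1-T^{rd})^{-1}$, and by Frobenius in characteristic $r$ we have
\[
1-T^{rd}\equiv(1-T^d)^r \bmod r,
\]
so the two agree mod $r$.

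\textbf{Case $x\in Z$.} Here I need the claim that the $G$-action on the geometric fiber $\pi^{-1}(x)(\Fqbar)$ has a fixed point, or more precisely that the fiber consists of a single geometric $G$-orbit with nontrivial stabilizer. Since $|G|=r$ is prime, such an orbit is a single fixed point. This geometric point is then Frobenius-stable, hence a unique closed point $y$ of degree $d$, and the local factor is $(1-T^d)^{-1}$ as required.

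The main obstacle is making this fiber analysis rigorous at the level of generality allowed ("generically Galois", $X$ possibly non-reduced or non-normal). There are three points to settle:
\begin{enumerate}
\item One has to fix what $Z$ means. I would take $Z$ to be the locus over which $\pi$ fails to be a $G$-torsor, with the reduced structure; zeta functions depend only on the reduced structure anyway.
\item One must argue that $G$ acts on $Y$ with $X$ as its quotient, at least on the level of $\Fqbar$-points. If this fails in general, I would first reduce to it, replacing $X$ and $Y$ by reductions and using the fact that zeta functions only see $\Fqbar$-points with Frobenius. Then $G$ acts transitively on each geometric fiber.
\item One must show that outside the torsor locus the action is not free. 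Since $G$ has prime order, a free transitive orbit has size $r$. Off the branch locus the fiber is then étale, so the map is étale there, by tameness ($r\neq p$) and the standard fact that a quotient by a group acting freely is étale.
\end{enumerate}

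If that characterization of the branch locus is available, the theorem reduces purely to the orbit-counting above. Alternatively, one can do the same argument directly with point counts. On $\F_{q^n}$-points, the set $Y(\F_{q^n})$ is $G$-stable. Counting orbits mod $r$ gives
\[
|Y(\F_{q^n})|\equiv|Y^G(\F_{q^n})| \bmod r,
\]
and $Y^G$ maps bijectively onto $Z$. This yields an additive version of the identity, but the multiplicative Euler-product version above avoids dividing by $n$, so I would present that one.
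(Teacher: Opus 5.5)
Your proposal is correct and is essentially the paper's naive proof in Section~2: you split the Euler product over $x\in|X|$ and classify each fiber as $r$ points of degree $\deg x$, one point of degree $r\deg x$, or (for $x\in Z$) one point of degree $\deg x$, and then use $(1-T^{\deg x})^r\equiv 1-T^{r\deg x}\bmod r$. The facts you flag as the main obstacle (\'etaleness off $Z$, a single $G$-orbit in each fiber, total ramification over $Z$) are exactly what the paper takes as known in its ``Recall that\dots'' step, and they amount to reading $X=Y/G$ into ``generically Galois''; this cannot be recovered by passing to reductions as your point~2 suggests, since for example forming $X$ by gluing two points of $Y/G$ makes the congruence fail.
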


\noindent
A striking aspect of \eqref{eqn:main-identity} is that the analogous identity over $\Z$ is not necessarily true (because of weight mismatches).

Theorem~\ref{eqn:main-identity} is inspired by \cite[Theorem 4]{hall2006functions}.  We give a naive proof in Section~\ref{sec:naive-proof} and a cohomological proof in Section~\ref{sec:cohomological-proof}.  The latter uses a cohomological formula given in Section~\ref{sec:zeta-modulo-ell} for the reduction of $\zeta(X,T)$ modulo a prime $\ell\neq p$.  We also give two corollaries (for curves) in Section~\ref{sec:two-corollaries}.

An impetus for this paper is question of Richard Griffon: when is the numerator of the zeta function of a curve over a finite field a trinomial?  A necessary condition is that the numerator is congruent to a trinomial modulo an integer $N>1$.
In Section~\ref{sec:main-application}, we apply the results of Section~\ref{sec:two-corollaries} (where $N=r$) to numerators of hyperelliptic ($r=2$) and superelliptic ($r>2$) curves.


\vfil
\section{Naive Proof of Theorem}\label{sec:naive-proof}

Consider the Euler-product expansion
\begin{equation}\label{eqn:euler-product-expansion}
	\zeta(Y,T)
	=
	\prod_{y\in|Y|}(1-T^{\deg(y)})^{-1} 
	=
	\prod_{x\in|X|}\prod_{y\in|Y|:\pi(y)=x}(1-T^{\deg(y)})^{-1}.
\end{equation}
Suppose $x\in|X|$, and let
\begin{equation}\label{eqn:defn-of-Y_x-and-L(Y_x,T)}
	|Y_x|:=\{y\in|Y|:\pi(y)=x\}\text{ and }
	L(Y_x,T):=\prod_{y\in |Y_x|}(1-T^{\deg(y)}).
\end{equation}
Suppose $y\in|Y_x|$, and let $I_y\seq D_y\seq G$ be the inertia and decomposition groups of $y$.

Recall that $\pi$ is \'etale over $X\ssm Z$ and totally ramified of prime degree $r$ over $Z$ and that $G\cong\Z/r$ acts transitively on $|Y_x|$.  Deduce
\begin{equation}\label{eqn:triple-classification}
	(|I_y|,[D_y:I_y],|Y_{x}|)
	=
	\begin{cases}
		(1,1,r)\text{ or }(1,r,1) & x\in |X|\ssm |Z| \\
		(r,1,1) & x\in |Z|
	\end{cases}.
\end{equation}
Observe that
$$
	(1-T^{\deg(x)})^r\equiv (1-T^{r\deg(x)})\bmod r,
$$
and \eqref{eqn:triple-classification} imply
\begin{equation}\label{eqn:euler-congruence}
	L(Y_x,T)
	\equiv
	\begin{cases}
		(1-T^{\deg(x)})^r & x\in |X|\ssm |Z| \\
		1-T^{\deg(x)} & x\in |Z|
	\end{cases}.
\end{equation}
Combining \eqref{eqn:euler-product-expansion}, \eqref{eqn:defn-of-Y_x-and-L(Y_x,T)}, and \eqref{eqn:euler-congruence} gives
$$
	\zeta(Y,T)
	\equiv
	\prod_{x\in|X|}(1-T^{\deg(x)})^{-r}\prod_{z\in|Z|}(1-T^{\deg(z)})^{r-1}
	\equiv
	\zeta(X,T)^r \zeta(Z,T)^{1-r}
	\bmod r
$$
as desired.


\vfil
\section{Cohomological Reduction of Zeta Modulo a Prime }\label{sec:zeta-modulo-ell}

\begin{theorem}\label{thm:zeta-modulo-ell-via-cohomology}
Let $X$ be a separated $\Fq$-scheme of finite type and $\bar{X}$ be its base change to $\Fqbar$.
Let $\phi\in\Gal(\Fqbar/\Fq)$ be the geometric Frobenius and $\ell\neq p$ be a rational prime.  Then
\begin{equation}\label{eqn:zeta-mod-ell-as-alternating-product}
	\zeta(X,T)
	\equiv
	\prod_i\det(1-\phi\,T \mid H^i_c(\bar{X},\Fell))^{(-1)^{i+1}} \bmod\ell.
\end{equation}
\end{theorem}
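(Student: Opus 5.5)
We compare the Grothendieck--Lefschetz trace formula with $\Zell$-coefficients to its reduction modulo $\ell$. Concretely, Grothendieck's formula \cite{grothendieck1966formule} holds for any torsion-free, finite-rank constructible sheaf. Applied to the constant sheaf it gives
$$
	\zeta(X,T)=\prod_i\det(1-\phi\,T\mid H^i_c(\bar X,\Qell))^{(-1)^{i+1}}.
$$
Note that the cohomology groups $H^i_c(\bar X,\Zell)$ may carry torsion, so we cannot simply reduce each factor modulo $\ell$. Instead, we work at the level of perfect complexes. By SGA~4$\tfrac12$ (Rapport and ``Fonctions $L$ modulo $\ell^n$ et modulo $p$''), $R\Gamma_c(\bar X,\Zell)$ is represented by a bounded complex $K^\bullet$ of finite free $\Zell$-modules with an endomorphism $\phi$ lifting geometric Frobenius. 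This complex satisfies two properties:
$$
	K^\bullet\otimes\Qell\simeq R\Gamma_c(\bar X,\Qell)
	\quad\text{and}\quad
	K^\bullet\otimes^{L}\Fell\simeq R\Gamma_c(\bar X,\Fell).
$$

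The key steps are as follows. First, the characteristic polynomial is multiplicative in short exact sequences of $\phi$-modules. Hence for any bounded complex $C^\bullet$ of finite free modules over a field or over $\Zell$ with an endomorphism, we have
$$
	\prod_i\det(1-\phi T\mid C^i)^{(-1)^{i}}=\prod_i\det(1-\phi T\mid H^i(C^\bullet))^{(-1)^{i}}
$$
whenever the right side makes sense, i.e.\ over a field. We apply this to $K^\bullet\otimes\Qell$. This shows that $\zeta(X,T)$ equals the alternating product $\prod_i\det(1-\phi T\mid K^i)^{(-1)^{i+1}}$. Each factor lies in $1+T\Zell[T]$, and all coefficients are in $\Zell$.

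Second, reduction modulo $\ell$ commutes with $\det$ on free modules: $\det(1-\phi T\mid K^i)\bmod \ell=\det(1-\phi T\mid K^i\otimes\Fell)$. Since $K^i$ is free, $K^\bullet\otimes\Fell$ computes $R\Gamma_c(\bar X,\Fell)$. Applying the multiplicativity argument again, now over $\Fell$, converts the product over $K^i\otimes\Fell$ into the product over $H^i_c(\bar X,\Fell)$.

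Finally, we make sense of ``$\equiv\bmod\ell$'' for rational functions. Both sides lie in $1+T\Zell[[T]]$, and congruence is meant coefficientwise. Since $\zeta(X,T)\in1+T\Z[[T]]$, the congruence modulo $\ell\Zell$ is the same as the congruence modulo $\ell$ in $\Z[[T]]$. Inverting polynomials with constant term $1$ commutes with reduction, so the identity of power series descends.

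The main obstacle is the existence of the perfect Frobenius-equivariant complex $K^\bullet$ with compatible base changes, and ensuring that the endomorphism $\phi$ on it really induces geometric Frobenius on both the $\Qell$ and $\Fell$ cohomology. I would cite this from SGA~4$\tfrac12$, where the finiteness of $R\Gamma_c$ with $\mathbb{Z}/\ell^n$ coefficients is established and the trace formula is proved for $\mathbb{Z}/\ell^n$-sheaves via Tor-finite complexes. Alternatively, one could bypass $\Qell$ entirely by citing the trace formula modulo $\ell$ directly from ``Fonctions $L$ modulo $\ell^n$'': there, $\zeta(X,T)$ reduced mod $\ell$ is the $L$-function of the constant sheaf $\Fell$. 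That formula already gives the alternating product of $\det(1-\phi T\mid H^i_c(\bar X,\Fell))$, and I would present it as the fallback route.
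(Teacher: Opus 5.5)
Your argument is correct, but it is organized differently from the paper's. The paper stays at the level of cohomology groups. It splits $H^i_c(\bar X,\Zell)$ into torsion $T_i$ and free quotient $F_i$, and uses the universal-coefficient sequences $0\to H^i_c(\bar X,\Zell)\otimes\Fell\to H^i_c(\bar X,\Fell)\to T_{i+1}[\ell]\to 0$. It then proves an elementary lemma (Proposition~\ref{prop:ell-torsion-vs-tensor-with-F_ell}) that $\det(1-\phi T\mid A[\ell])=\det(1-\phi T\mid A/\ell A)$ for finite $A$, so the torsion contributions telescope away.

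You instead pass to a perfect $\Zell$-model $K^\bullet$ with chain-level Frobenius. There, reduction mod $\ell$ commutes with $\det$ term by term and torsion never appears. The paper's lemma is exactly what this formalism hides. If $0\to P_1\to P_0\to A\to 0$ is a $\phi$-equivariant free resolution of a finite $A$, then $\det(1-\phi T\mid P_0)/\det(1-\phi T\mid P_1)=1$, because the resolution becomes an isomorphism after $\otimes\Qell$. By the $\mathrm{Tor}$ sequence, its reduction mod $\ell$ is $\det(1-\phi T\mid A/\ell A)/\det(1-\phi T\mid A[\ell])$.

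The trade-off is in the input. The paper needs only finite generation of $H^i_c(\bar X,\Zell)$ and the long exact sequence for $0\to\Zell\to\Zell\to\Fell\to 0$. Your route rests entirely on the step you flag: a derived $\Zell$-formalism in which $R\Gamma_c(\bar X,\Zell)$ is perfect, with a Frobenius endomorphism compatible with both $\otimes\Qell$ and $\otimes^L\Fell$. Once you have that, the lift to an honest chain map is automatic, since derived endomorphisms of a bounded complex of free modules are homotopy classes of chain maps.

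Your fallback is shorter still. Deligne's trace formula with $\Fell$-coefficients identifies the right-hand side with $L(X,\Fell)$. Since Frobenius acts trivially on stalks of the constant sheaf, $L(X,\Fell)=\prod_{x\in|X|}(1-T^{\deg(x)})^{-1}\bmod\ell$. The theorem then becomes a citation, with no comparison to $\Qell$ at all.
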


\begin{proof}
A theorem of Grothendieck (see \cite[Th\'eor\`eme 5.1]{grothendieck1966formule} or \cite[Theorem~10.5.1]{fu2011etale}) implies that
$$
	\zeta(X,T)=\prod_i\det(1-\phi\,T \mid H^i_c(\bar{X},\Qell))^{(-1)^{i+1}}
$$
(since $X$ is $\Fq$-compactifiable).  We must analyze a product with $\Fell$ in lieu of $\Qell$.  This requires taking into account that $M_i:=H^i_c(\bar{X},\Zell)$ is a finitely generated (see \cite[Expos\'ee VI, Lemme 2.2.1]{sga5}) and not necessarily free (compare \cite{MR725400}).

Let $T_i\seq  M_i$ be the torsion submodule and $F_i:=M_i/T_i$ be the maximal free quotient so that
$$
	\det(1 - \phi\,T \mid H^i_c(\bar{X},\Qell))
	=
	\det(1 - \phi\,T \mid F_i)
	\in 1+T\cdot\Zell[T]
$$
and thus
\begin{equation}\label{eqn:zeta-over-Z_ell-as-alternating-product}
	\zeta(X,T)=\prod_i\det(1-\phi\,T \mid F_i)^{(-1)^{i+1}}.
\end{equation}

Consider the exact sequence of \'etale $\Zell$-sheaves on $X$ given by
$$
	0\longto\Zell\overset{\times\ell}\longto\Zell\longto\Fell\longto 0.
$$
Its long exact cohomology sequence breaks into short exact sequences of $\Fell[\phi]$-modules
\begin{equation*}
	0
	\longto H^i_c(\bar{X},\Zell)\otimes\Fell
	\longto H^i_c(\bar{X},\Fell)
	\longto H^{i+1}_c(\bar{X},\Zell)[\ell]
	\longto 0
\end{equation*}
or equivalently
\begin{equation}\label{lem:short-exact-with-F_ell-cohomology}
	0
	\longto (T_i\oplus F_i)\otimes\Fell
	\longto H^i_c(\bar{X},\Fell)
	\longto T_{i+1}[\ell]
	\longto 0.
\end{equation}

For each finite $\Fell[\phi]$-module $M$, let
$$
	\Lambda(M):=\det(1-\phi\,T\mid M)\in 1+T\cdot \Fell[T].
$$  
Observe that the exactness of \eqref{lem:short-exact-with-F_ell-cohomology} implies that $$
	\Lambda(H^i_c(\bar{X},\Fell))
	=
	\Lambda((T_i\oplus F_i)\otimes\Fell)\cdot
	\Lambda(T_{i+1}[\ell])
	=
	\Lambda(F_i\otimes\Fell)\cdot
	\Lambda(T_i\otimes\Fell)\cdot
	\Lambda(T_{i+1}[\ell]).
$$
Moreover, Proposition~\ref{prop:ell-torsion-vs-tensor-with-F_ell} (in Appendix~\ref{sec:appendix}) implies that
$$
	\Lambda(T_{i}\otimes\Fell)
	=
	\Lambda(T_{i}[\ell]).
$$

Deduce that
$$
	\prod_i(\Lambda(T_i\otimes\Fell)\cdot\Lambda(T_{i+1}[\ell]))^{(-1)^{i+1}}
	=
	\prod_i\Lambda(T_i[\ell])^{(-1)^{i+1}}\cdot\prod_i\Lambda(T_{i+1}[\ell])^{(-1)^{i+1}}
	=
	1	
$$
(since the product telescopes) and 
\begin{equation}\label{eqn:comparing-F_ell-and-Z_ell-parts}
	\prod_i\Lambda(H^i_c(\bar{U},\Fell))^{(-1)^{i+1}}
	=
	\prod_i\Lambda(F_i\otimes\Fell)^{(-1)^{i+1}}
	\equiv
	\prod_i\det(1-\phi\,T\mid F_i)^{(-1)^{i+1}}
	\bmod\ell.
\end{equation}
The desired identity \eqref{eqn:zeta-mod-ell-as-alternating-product} now follows from \eqref{eqn:zeta-over-Z_ell-as-alternating-product} and \eqref{eqn:comparing-F_ell-and-Z_ell-parts}.
\end{proof}


\subsection{Appendix: Linear Algebra}\label{sec:appendix}

\begin{prop}\label{prop:ell-torsion-vs-tensor-with-F_ell}
Suppose $\Zlambda$ is a finite extension of $\Zell$ and $A$ is a finite $\Zlambda[\phi]$-module.  Let $\lambda\in\Zlambda$ be a uniformizer and $\Flambda:=\Zlambda/\lambda\Zlambda$ be the residue field.  Let $A[\lambda]\seq A$ be the $\Zlambda[\phi]$-submodule annihilated by $\lambda$.  Then $A[\lambda]$ and $A\otimes\Flambda:=A/\lambda A$ are finite $\Flambda[\phi]$-modules, and
\begin{equation}\label{eqn:A[lambda]-vs-A/lambdaA}
	\det(1-\phi\,T\mid A[\lambda])
	=
	\det(1-\phi\,T\mid A\otimes\Flambda).
\end{equation}
\end{prop}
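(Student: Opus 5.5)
The plan is to exploit the exact sequence
\[
0\longto A[\lambda]\longto A\overset{\times\lambda}\longto A\longto A\otimes\Flambda\longto 0
\]
of $\Zlambda[\phi]$-modules. Multiplication by $\lambda$ commutes with $\phi$ because $\phi$ acts $\Zlambda$-linearly, so every term is a $\Zlambda[\phi]$-module. The outer two terms are killed by $\lambda$ and are finite, hence they are finite $\Flambda[\phi]$-modules, which is the first assertion. A naive ``multiplicativity of determinants'' applied to the two copies of $A$ does not directly make sense, since $A$ is not an $\Flambda$-vector space. We therefore first pass to a d\'evissage that lives entirely over $\Flambda$.

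The key tool is Brauer-character / Jordan--H\"older multiplicativity. For a finite $\Zlambda[\phi]$-module $M$ of finite length, define $\Lambda^{ss}(M):=\prod_j\det(1-\phi\,T\mid S_j)$, where the $S_j$ run over the composition factors of $M$ as a $\Zlambda[\phi]$-module. Each $S_j$ is simple, so $\lambda S_j$ is a proper submodule and hence zero; thus $S_j$ is an $\Flambda[\phi]$-module. The invariant $\Lambda^{ss}$ is multiplicative in short exact sequences by Jordan--H\"older. For an $\Flambda[\phi]$-module $M$, it agrees with $\det(1-\phi\,T\mid M)$, because the characteristic polynomial is multiplicative along an $\Flambda[\phi]$-stable filtration.

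Split the four-term sequence into two short exact sequences,
\[
0\to A[\lambda]\to A\to \lambda A\to 0
\qquad\text{and}\qquad
0\to\lambda A\to A\to A\otimes\Flambda\to 0.
\]
Multiplicativity of $\Lambda^{ss}$ gives
\[
\Lambda^{ss}(A[\lambda])\Lambda^{ss}(\lambda A)=\Lambda^{ss}(A)=\Lambda^{ss}(\lambda A)\Lambda^{ss}(A\otimes\Flambda).
\]
All these quantities lie in $1+T\cdot\Flambda[T]$, where cancellation is allowed. It follows that $\Lambda^{ss}(A[\lambda])=\Lambda^{ss}(A\otimes\Flambda)$, and \eqref{eqn:A[lambda]-vs-A/lambdaA} then follows from the compatibility noted in the previous paragraph.

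The only delicate step is setting up $\Lambda^{ss}$ cleanly: composition series exist and have length $\le \log_{|\Flambda|}|A|$ because $A$ is finite, and Jordan--H\"older applies to modules over the ring $\Zlambda[\phi]$. No further obstacle is expected. Alternatively, one could bypass the formalism by inducting on the length of $A$, peeling off a simple submodule $S\subseteq A[\lambda]$ and applying the snake lemma to multiplication by $\lambda$ on $0\to S\to A\to A/S\to 0$. That route is messier, so I would use the Jordan--H\"older argument.
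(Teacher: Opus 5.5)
Your argument is correct, but it is organized differently from the paper's.

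The paper never assigns a characteristic polynomial to a module that is not killed by $\lambda$. It inducts on the exponent $n$ with $\lambda^n A=0$, sets $B:=\lambda A$ (which is killed by $\lambda^{n-1}$), and applies the snake lemma to multiplication by $\lambda$ from $0\to B\to A\to A\otimes\Flambda\to 0$ to $0\to\lambda B\to B\to B\otimes\Flambda\to 0$. This gives the four-term exact sequence $0\to B[\lambda]\to A[\lambda]\to A\otimes\Flambda\to B\otimes\Flambda\to 0$ of $\Flambda[\phi]$-modules. Multiplicativity of characteristic polynomials over the field $\Flambda$, together with the inductive hypothesis for $B$, then finishes the proof.

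You instead extend $\det(1-\phi\,T\mid -)$ to all finite $\Zlambda[\phi]$-modules through Jordan--H\"older factors, in effect working in the Grothendieck group of finite-length $\Zlambda[\phi]$-modules. The two short exact sequences $0\to A[\lambda]\to A\to\lambda A\to 0$ and $0\to\lambda A\to A\to A\otimes\Flambda\to 0$ then give the result by a single cancellation, with no induction.

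Your route costs Jordan--H\"older and the check that simple $\Zlambda[\phi]$-modules are killed by $\lambda$. There you should say why $\lambda S_j\neq S_j$: $\lambda$ acts nilpotently on the finite module $A$, or use Nakayama.

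In return, your route proves more. It shows $A[\lambda]$ and $A\otimes\Flambda$ have the same composition factors, i.e. isomorphic semisimplifications as $\Flambda[\phi]$-modules. The same argument also applies to the kernel and cokernel of any $\phi$-equivariant endomorphism of a finite-length module. The paper's route is more elementary, using only the snake lemma and linear algebra over a field.
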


\begin{proof}
Suppose $n\geq 0$ and $\lambda^n\Zlambda\seq\Zlambda$ is the annihilator of $A$.

Observe that the proposition is true for $n=0$ since $A[\lambda]=A\otimes\Flambda=0$.

Suppose that $n>0$ and that
\begin{equation}\label{eqn:inductive-identity}
	\det(1 - \phi\,T\mid B[\lambda])=\det(1 - \phi\,T\mid B\otimes\Flambda)
\end{equation}
for every finite $\Zlambda[\phi]$-module $B$ annihilated by $\lambda^{n-1}\Zlambda$.

Observe that $B:=\lambda A\seq A$ and $\lambda B\seq B$ are $\Zlambda[\phi]$-invariant.  Moreover, \eqref{eqn:inductive-identity} holds since $\lambda^{n-1}\Zlambda$ annihilates $B$.

Consider the commutative diagram of $\Zlambda[\phi]$-modules with injective/surjective morphisms and exact rows/columns
$$
	\xymatrix{
		B[\lambda]\ar@{^{(}->}[r]\ar@{^{(}->}[d]
		& A[\lambda]\ar@{->>}[r]\ar@{^{(}->}[d]\ar@{-->}[dr]
		& A[\lambda]/B[\lambda]\ar@{^{(}->}[d]
		\\
		B\ar@{^{(}->}[r]\ar^{\times\lambda}@{->>}[d]
		& A\ar@{->>}[r]\ar^{\times\lambda}@{->>}[d]
		& A\otimes\Flambda\ar^{\times\lambda}@{->>}[d]
		\\
		\lambda B\ar@{^{(}->}[r]
		& B\ar@{->>}[r]
		& B\otimes\Flambda
	}.
$$
Applying the snake lemma to the bottom two rows yields
\begin{equation}\label{eqn:result-of-snake-lemma}
	0\longto B[\lambda]\longto A[\lambda]\longto A\otimes\Flambda\longto B\otimes\Flambda\longto 0
\end{equation}
is an exact sequence of $\F_\lambda[\phi]$-modules.  In particular, \eqref{eqn:inductive-identity} and \eqref{eqn:result-of-snake-lemma} imply that
\begin{equation*}\label{eqn:herbrand-quotients-wrt-A-and-B}
	\frac
	{\det(1 - \phi\,T\mid A\otimes\Flambda)}
	{\det(1 - \phi\,T\mid A[\lambda])}
	=
	\frac
	{\det(1 - \phi\,T\mid B[\lambda])\cdot\det(1 - \phi\,T\mid A\otimes\Flambda)}
	{\det(1 - \phi\,T\mid A[\lambda])\cdot \det(1 - \phi\,T\mid B\otimes\Flambda)}
	=
	1
\end{equation*}
as desired.
\end{proof}


\vfil
\section{Cohomological Proof of Theorem}\label{sec:cohomological-proof}

Let $j\colon U\to X$ be the inclusion of $U:=X\ssm Z$ and $V:=\pi^{-1}(U)$.  Observe that
\begin{equation}\label{eqn:Z(X,T)-as-product}
		\zeta(X,T)=\zeta(U,T)\zeta(Z,T)
\end{equation}
since $X=U\sqcup Z$ and that
\begin{equation}\label{eqn:Z(Y,T)-as-product}
	\zeta(Y,T)=\zeta(V,T)\zeta(Z,T)
\end{equation}
since $\pi$ induces an isomorphism $\pi^{-1}(Z)\to Z$.

Let $\phi\in\Gal(\Fqbar/\Fq)$ be the geometric Frobenius.  Recall that $r:=|G|$ is prime and $r\neq p$.  Observe that $\calP_r:=\pi_*\Fr$ is a constructible sheaf of $\Fr[G]$-modules and that
$$
	H^i_c(\bar{V},\Fr)\cong H^i_c(\bar{U},\calP_r)
$$
as $\Fr[\phi]$-modules since $\pi$ is finite (see \cite[Prop.~5.7.4]{fu2011etale}), hence
\begin{equation}\label{eqn:zeta(V,T)-mod-r}
	\zeta(V,T) \equiv \prod_i\det(1-\phi\,T\mid H^i_c(\bar{U},\calP_r))\bmod r
\end{equation}
by Theorem~\ref{thm:zeta-modulo-ell-via-cohomology}.

Observe that $j^*\calP_r$ is a lisse $\Fr[G]$-sheaf on $U$ of rank $r$ and that its $G$-semisimplification $\calS_r$ satisfies
\begin{equation}\label{eqn:passage-to-semisimplification}
	\det(1-\phi\,T\mid H^i_c(\bar{U},\calP_r))
	=
	\det(1-\phi\,T\mid H^i_c(\bar{U},\calS_r))
\end{equation}
for every $i$.  Also, $\Fr$ is the only simple $\Fr[G]$-module, hence $\calS_r\cong\Fr^r$ as $\Fr[G]$-sheaves and
\begin{equation}\label{eqn:passage-to-power-of-trivial}
	\det(1-\phi\,T\mid H^i_c(\bar{U},\calS_r))
	=	
	\det(1-\phi\,T\mid H^i_c(\bar{U},\Fr))^r
\end{equation}
for every $i$.  In particular, combining \eqref{eqn:zeta(V,T)-mod-r}, \eqref{eqn:passage-to-semisimplification}, and \eqref{eqn:passage-to-power-of-trivial} yields
$$
	\zeta(V,T)
	\equiv
	\prod_i\det(1-\phi\,T\mid H^i_c(\bar{U},\Fr))^{r(-1)^{i+1}}
	\equiv
	\zeta(U,T)^r\bmod r
$$
which combines with \eqref{eqn:Z(X,T)-as-product} and \eqref{eqn:Z(Y,T)-as-product} to yield
\begin{equation}\label{eqn:main-identity-restated}
	\zeta(Y,T)
	\equiv
	\zeta(U,T)^r\zeta(Z,T)
	\equiv
	\zeta(X,T)^r\zeta(Z,T)^{1-r}
	\bmod r
\end{equation}
as desired.


\vfil
\section{Corollaries for Curves}\label{sec:two-corollaries}

Suppose $X,Y$ are proper, smooth, and geometrically connected curves over $\Fq$.
Let
$$
	L(C,T):=\zeta(X,T)(1-T)(1-qT)\in 1+T\Z[T]
$$
be the numerator of $\zeta(C,T)$ for each $C\in\{X,Y\}$, and recall
$$
	\deg(L(C,T))=2\cdot\genus(C).
$$
Observe that \eqref{eqn:main-identity} (restated in \eqref{eqn:main-identity-restated})  is equivalent to
\begin{equation}\label{eqn:L(Y,T)-modulo-r}
	L(Y,T)\equiv L(X,T)^r(1-T)^{1-r}(1-qT)^{1-r}\prod_{z\in|Z|}(1-T^{\deg(z)})^{r-1}\bmod r.
\end{equation}
Moreover, taking degrees of both sides of \eqref{eqn:main-identity} yields the Riemann-Hurwitz formula:
\begin{equation}\label{eqn:riemann-hurwitz}
	2\cdot\genus(Y) - 2 = r\cdot(2\cdot\genus(X)-2)+(r-1)\cdot\deg(Z).
\end{equation}


\vfil
\section{Application to Hyperelliptic/Superelliptic Curves}\label{sec:main-application}

Let $\Fq$ be a finite field, $\pi_1,\ldots,\pi_d\in\Fq[x]$ be distinct monic irreducibles, $e_1,\ldots,e_d\in\N$ be positive, and
$$
	f:=\prod_{i=1}^m\pi_i^{e_i},\ \rad(f):=\prod_{i=1}^m\pi_i\in\Fq[x].
$$
Let $Z\sub\A^1$ be the zero locus of $f$, and observe $\deg(Z)=\deg(\rad(f))$.

Let $r$ be a prime divisor of $q-1$ and $A/\Fq$ be the affine curve $y^r=f(x)$.  The group $G:=\bmu_r\seq\Fqtimes$ acts faithfully on $A$ via $\zeta(x,y)=(x,\zeta y)$ for $\zeta\in G$, and the morphism $A\to\A^1$ given by $(x,y)\mapsto x$ is the quotient map $A\to A/G$.

Let $A^\nu\to A$ be the normalization of $A$ and $A^\nu\to Y$ be the smooth completion of $A^\nu$.  The action $G\lact A$ extends uniquely to (faithful) actions $G\lact A^\nu$ and $G\lact Y$ such that the morphisms 
$A^\nu\to A$ and $A^\nu\to Y$ are $G$-equivariant (see Propositions~\ref{prop:normalization-and-automorphisms} and \ref{prop:smooth-completion-and-automorphisms} in Appendix~\ref{sec:group-actions}).  The composed morphism $A^\nu\to A\to\A^1$ is the quotient $A^\nu\to A^\nu/G$.  It extends uniquely to a morphism $\pi\colon Y\to\P^1$, the quotient $Y\to Y/G$.

Suppose $f$ that is $r$th power free, that is, $0<e_i<r$ for $1\leq i\leq m$ so that $N\to A$ is bijective (on points).  Suppose that $r\mid\deg(f)$ so that $Z\sub\P^1$ is the ramification locus of $\pi\colon Y\to\P^1$.  Then
\begin{equation}\label{eqn:superelliptic-L-polynomial-modulo-r}
	L(Y,T)\equiv (1-T)^{2-2r}\prod_{i=1}^m(1-T^{\deg(\pi_i)})^{r-1}\bmod r
\end{equation}
by \eqref{eqn:L(Y,T)-modulo-r} since $L(\P^1,T)=1$ and $q\equiv 1\bmod r$.  Moreover,
\begin{equation}\label{eqn:superelliptic-genus-formula}
	2\cdot\genus(Y)
	=
	2 + r(-2)+(r-1)\deg(Z)
	=
	(r-1)(\deg(\rad(f))-2)
\end{equation}
by \eqref{eqn:riemann-hurwitz}.


\subsection{Trinomial Hyperelliptics}
 
Suppose $r=2$ (hence $q$ is odd), and let $g:=\genus(Y)$.  Observe that
\begin{equation}\label{eqn:product-of-cyclotomics}
	(1-T)^2L(Y,T)
	\equiv
	\prod_{i=1}^m(1-T^{\deg(\pi_i)})
	\equiv
	\zeta(Z,T)^{-1}
	\bmod 2
\end{equation}
by \eqref{eqn:superelliptic-L-polynomial-modulo-r} (compare \cite[Prop.~2.10]{MR4427965}) and
$$
	\deg(f) = \deg(\rad(f)) = 2g + 2
$$
by \eqref{eqn:superelliptic-genus-formula}.

\begin{prop}\label{prop:Z(Z,T)^{-1}-modulo-ell}
Let $\ell\in\N$ be a prime and $\calD$ be a finite submultiset of $\N_{\ell'}:=\N\ssm \ell\N$.
Let $Z$ be a finite $\Fq$-scheme and $Z_n$ be its base change to $\Fqn$.
Let
$$
	e:=\lcm(\ord_\ell(\deg(z):z\in|Z|))
$$
and $d:=\ell^e$.  Then
\begin{enumerate}
\item\label{item:deg(z_d)-is-coprime-to-ell} $\deg(z_d)\in\N_{\ell'}$ for every $z_d\in|Z_d|$;
\item\label{item:detecting-Z(Z,T)} $\zeta(Z,T)^{-1}\equiv\prod_{d\in\calD}(1-T^d)\bmod \ell$ if and only if $\calD=\{\deg(z_d):z_d\in|Z_d|\}$ as multisets;
\item\label{item:ord-T=1-of-Z(Z,T)} $\ord_{T=1}(\zeta(Z,T)^{-1}\in\F_\ell[T])=||Z_d||$.
\end{enumerate}
\end{prop}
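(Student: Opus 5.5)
The plan is to reduce everything to the elementary identity $(1-T^{n})\equiv(1-T^{m})^{\ell^a}\bmod\ell$ for $n=\ell^a m$ with $\ell\nmid m$, as in the naive proof of the main Theorem, together with the usual description of how a closed point splits under a finite base change. Throughout I read the hypothesis on $e$ as guaranteeing $e\geq\ord_\ell(\deg(z))$ for every $z\in|Z|$; the lcm of these exponents is such a bound, up to the degenerate convention when all of them vanish, in which case $d=1$ and nothing changes. I read $||Z_d||$ as the number of closed points of $Z_d$.

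\emph{Step 1 (item \eqref{item:deg(z_d)-is-coprime-to-ell}).} A closed point $z\in|Z|$ of degree $n$ is a $\Gal(\Fqbar/\Fq)$-orbit of size $n$ in $Z(\Fqbar)$. Over $\F_{q^d}$ this orbit breaks into $\gcd(n,d)$ orbits, each of size $n/\gcd(n,d)$. Write $n=\ell^a m$ with $\ell\nmid m$. Since $d=\ell^e$ and $e\geq a$, we get $\gcd(n,d)=\ell^a$. Hence $z$ contributes exactly $\ell^a$ points $z_d\in|Z_d|$, each of degree $m\in\N_{\ell'}$, which proves \eqref{item:deg(z_d)-is-coprime-to-ell}.

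\emph{Step 2 (item \eqref{item:detecting-Z(Z,T)}).} By Step 1 and the congruence above,
$$
\zeta(Z,T)^{-1}=\prod_{z\in|Z|}(1-T^{\deg z})\equiv\prod_{z\in|Z|}(1-T^{m_z})^{\ell^{a_z}}=\prod_{z_d\in|Z_d|}(1-T^{\deg z_d})\bmod\ell .
$$
This gives the ``if'' direction. For ``only if'' I must show that a product $\prod_{m\in\calD}(1-T^m)\in\F_\ell[T]$, with $\calD\subset\N_{\ell'}$ a multiset, determines $\calD$. For $\ell\nmid m$ the polynomial $1-T^m$ is separable over $\Fellbar$, and its roots are exactly the $m$-th roots of unity. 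For each $k\in\N_{\ell'}$, the multiplicity of a primitive $k$-th root of unity in the product is
$$
N(k):=\#\{m\in\calD: k\mid m\},
$$
counted with multiplicity. Möbius inversion on the divisibility poset, or simply a downward induction on $k$ starting from the largest element of $\calD$, recovers the multiplicity of each $m$ in $\calD$ from the function $N$. So two such multisets giving congruent products coincide. This uniqueness step is the only one with real content, and I expect it to be the main obstacle. It is where the restriction $\calD\subset\N_{\ell'}$ is essential: without it, $(1-T^{\ell})$ and $(1-T)^{\ell}$ would collide.

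\emph{Step 3 (item \eqref{item:ord-T=1-of-Z(Z,T)}).} Using the product from Step 2, each factor $1-T^{\deg z_d}$ has a simple root at $T=1$, by separability since $\ell\nmid\deg z_d$. Hence $\ord_{T=1}$ of the product equals the number of factors, which is $||Z_d||$.
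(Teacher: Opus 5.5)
Your argument is correct, and its skeleton is the paper's: split each closed point over $\F_{q^{\ell^e}}$, identify $\zeta(Z,T)^{-1}\bmod\ell$ with $\prod_{z_d}(1-T^{\deg z_d})$, use uniqueness of products of factors $1-T^m$ with $\ell\nmid m$, and count simple roots at $T=1$.

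The genuine difference is how you get $\zeta(Z,T)\equiv\zeta(Z_d,T)\bmod\ell$. The paper cites Proposition~\ref{prop:degree-ell-base-change-leaves-mod-ell-reduction-unchanged}. That result is proved through the $\Fell$-cohomological formula of Theorem~\ref{thm:zeta-modulo-ell-via-cohomology}, so it needs $\ell\neq p$, and it has to be iterated $e$ times. You instead apply $1-T^{\ell^a m}\equiv(1-T^m)^{\ell^a}\bmod\ell$ directly to the Euler factors, as in the naive proof of Section~\ref{sec:naive-proof}. For finite $Z$ your route is preferable: it is elementary and valid for every prime $\ell$, including $\ell=p$, which the statement does not exclude. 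The paper's lemma buys generality (arbitrary separated $S$ of finite type) that is not needed here.

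Your uniqueness step is a repackaging of the paper's. You note that the multiplicity of a primitive $k$-th root of unity is $\#\{m\in\calD:k\mid m\}$ and then invert. The paper uses pairwise coprimality of the reduced cyclotomic polynomials $\Phi_n$, $n\in\N_{\ell'}$, and peels off $\max\calD$ by induction.

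One small correction to your preliminary remark on the lcm. The problematic case is not when all exponents vanish, since then $e=0$ is a valid bound. It is when some vanish and others do not: $\lcm(0,a)=0$ would give $d=1$ and break item (1). The intended $e$ is $\max_z\ord_\ell(\deg z)$, which is the reading you actually use.
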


\begin{proof}
If $z\in|Z|$ and $e':=\ord_\ell(\deg(z))$, then $z$ splits into $\ell^{e'}$ points in $Z_{\ell^e}$, all of degree $\frac{1}{\ell^{e'}}\deg(z)\in\N_{\ell'}$, so \ref{item:deg(z_d)-is-coprime-to-ell} holds.  
Proposition~\ref{prop:degree-ell-base-change-leaves-mod-ell-reduction-unchanged} (in Appendix~\ref{sec:scalar-base-change}) and Proposition~\ref{prop:common-cyclotomic-factorizations} (in Appendix~\ref{sec:cyclotomic-factorization})
then imply that part \ref{item:detecting-Z(Z,T)} holds.  Finally, $\ord_{T=1}(1-T^m)=1$ for $m\in\N_{\ell'}$, hence part \ref{item:ord-T=1-of-Z(Z,T)} holds.
\end{proof}

We give necessary and sufficient conditions for
\begin{equation}\label{eqn:trinomial-condition}
	L(Y,T)
	\equiv
	1 + aT^g + q^gT^{2g}\in\F_2[T].
\end{equation}

\ms\noi
\underline{$a=0$}:
Let $2g=o2^e$ be the unique factorization in $\Z$ with $o$ odd.  Observe $e\geq 1$ and \eqref{eqn:trinomial-condition} is equivalent to
$$
	(1+T)^2L(Y,T)
	\equiv
	(1+T)^2(1+T^{2g})
	\equiv
	(1+T)^2(1+T^o)^{2^e}\bmod 2.
$$
This holds (by Proposition~\ref{prop:Z(Z,T)^{-1}-modulo-ell}) if and only if $f$ splits over $\F_{q^{2^e}}$ into a product of two linears and $2^e$ irreducibles of (odd) degree $o$.

\ms\noi
\underline{$a=1$}:
Let $g=o2^e$ be the unique factorization in $\Z$ with $o$ odd.  Observe $e\geq 0$  and \eqref{eqn:trinomial-condition} is equivalent to
\begin{equation}\label{eqn:L(Y,T)-mod-2-for-a=1}
	(1+T)^2L(Y,T)
	\equiv
	(1+T)^2(1+T^g+T^{2g})
	\equiv
	(1+T)^2(1+T^o+T^{2o})^{2^e}\bmod 2.
\end{equation}
Moreover, $\ord_{T=1}(1+T^o+T^{2o})=0$ since $o$ is odd.  Therefore \eqref{eqn:product-of-cyclotomics} and 
Proposition~\ref{prop:Z(Z,T)^{-1}-modulo-ell} imply that \eqref{eqn:L(Y,T)-mod-2-for-a=1} holds if and only if $f$ factors over $\F_{q^{2^e}}$ as a product of two irreducibles of odd degrees $d_1\leq d_2$ and
\begin{eqnarray*}
	(1+T^2)(1+T^g+T^{2g})
	& \equiv & 	1+T^2+T^g+T^{g+2}+T^{2g-2}+T^{2g} \\
	&\equiv &	1+T^{d_1}+T^{d_2}+T^{d_1+d_2}\bmod 2.
\end{eqnarray*}
The last equivalence holds if and only if $(g,e,d_1,d_2)$ equals $(1,0,1,3)$ or $(2,1,3,3)$.


\vfil
\subsection{Appendix: Groups Actions}\label{sec:group-actions}

\begin{prop}\label{prop:normalization-and-automorphisms}
Let $S$ be an integral scheme and $\nu\colon S^\nu\to S$ be a normalization morphism.  There is a unique homomorphism $\nu^*\colon\Aut(S)\to\Aut(S^\nu)$ given by $\alpha\mapsto\alpha^\nu$ and making
\begin{equation}\label{eqn:normalization-square}
	\xymatrix{
		S^\nu\ar[d]_\nu & S^\nu\ar[l]_{\alpha^\nu}\ar[d]^\nu \\
		S & S\ar[l]_{\alpha}
	}
\end{equation}
commute for every $\alpha\in\Aut(S)$.
\end{prop}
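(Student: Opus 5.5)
The plan is to reduce to the affine case, where normalization has a concrete description, and then glue using the universal property of normalization. The key input is that for an integral scheme $S$ with function field $K$, the normalization $\nu\colon S^\nu\to S$ is characterized as follows: it is the affine morphism corresponding to the quasi-coherent $\mathcal{O}_S$-algebra $\widetilde{\mathcal{O}}_S$, the integral closure of $\mathcal{O}_S$ in the constant sheaf $K$. Equivalently, it is the terminal dominant morphism from a normal integral scheme to $S$: every dominant morphism $T\to S$ with $T$ normal and integral factors uniquely through $\nu$.

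First, given $\alpha\in\Aut(S)$, consider the composite $\alpha\circ\nu\colon S^\nu\to S$. It is dominant, because $\alpha$ is an isomorphism and $\nu$ is dominant (it is birational). Its source $S^\nu$ is normal and integral. By the universal property, $\alpha\circ\nu$ factors uniquely as $\nu\circ\alpha^\nu$ for some morphism $\alpha^\nu\colon S^\nu\to S^\nu$. This is exactly the statement that the square \eqref{eqn:normalization-square} commutes, and uniqueness of the factorization gives uniqueness of $\alpha^\nu$.

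To justify the universal property, I will work affine-locally. For an affine open $\operatorname{Spec}R\subseteq S$, the preimage $\nu^{-1}(\operatorname{Spec}R)$ is $\operatorname{Spec}\widetilde R$, where $\widetilde R$ is the integral closure of $R$ in $K$. A dominant $T\to \operatorname{Spec}R$ with $T$ normal integral induces an injection $K\hookrightarrow K(T)$. Since $\mathcal{O}_T$ is integrally closed in $K(T)$, the map $R\to\mathcal{O}_T(W)$ extends uniquely to $\widetilde R$ for each affine open $W\subseteq T$. These extensions are compatible under restriction, so they glue. Alternatively, $\alpha^\nu$ can be written down directly: $\alpha$ induces an automorphism $\alpha^\#$ of $K$ that carries $\mathcal{O}_S$ to $\alpha_*\mathcal{O}_S$. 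Integral closure is functorial, so $\alpha^\#$ carries $\widetilde{\mathcal{O}}_S$ isomorphically to $\alpha_*\widetilde{\mathcal{O}}_S$, and taking relative Spec of this isomorphism gives $\alpha^\nu$.

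Second, I will show that $\alpha\mapsto\alpha^\nu$ is a homomorphism into $\Aut(S^\nu)$. For $\alpha,\beta\in\Aut(S)$, both $(\alpha\beta)^\nu$ and $\alpha^\nu\beta^\nu$ make the square commute for $\alpha\beta$, since
$$\nu\circ\alpha^\nu\circ\beta^\nu=\alpha\circ\nu\circ\beta^\nu=\alpha\circ\beta\circ\nu.$$
By uniqueness they are equal, and the same argument gives $\mathrm{id}^\nu=\mathrm{id}$. Consequently $\alpha^\nu\circ(\alpha^{-1})^\nu=\mathrm{id}$, so each $\alpha^\nu$ is an automorphism.

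I expect the only real obstacle to be the uniqueness half of the universal property, specifically when $S$ is not assumed Noetherian or quasi-compact. Existence glues straightforwardly, but uniqueness needs an argument. I would handle it by noting that two morphisms $S^\nu\to S^\nu$ over $S$ are determined by their restrictions to affine opens $\nu^{-1}(\operatorname{Spec}R)=\operatorname{Spec}\widetilde R$, because $\nu$ is affine. On each such open they correspond to $R$-algebra maps $\widetilde R\to\mathcal{O}(W)$ into integrally closed subrings of $K$. Such a map restricts to the identity on $K$ (it is determined by the map on $R$, hence on the fraction field), so it is uniquely determined by its restriction to $R$.
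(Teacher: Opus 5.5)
Your proposal is correct and is essentially the paper's proof: apply the universal property of normalization (which the paper takes as the definition, citing Liu) to the dominant map $\alpha\circ\nu$ to get $\alpha^\nu$, then use uniqueness to obtain $\mathrm{id}^\nu=\mathrm{id}$, compatibility with composition, and invertibility. Your affine-local verification of the universal property is an optional extra; note that there the induced map on $K$ is the automorphism coming from $\alpha$, not the identity, and what actually matters is only that a ring map out of $\widetilde R$ into a domain, injective on $R$, is determined by its restriction to $R$.
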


\begin{proof}
By definition of a normalization morphism (see \cite[\S4.1.2, Def.~2.19]{liu2002algebraic}), $S^\nu$ is normal and every dominant morphism $t\colon T\to S$ with $T$ normal factors uniquely through $\nu$:
$$
	\xymatrix{
		S^\nu\ar[d]_\nu & T\ar[l]_{t^\nu}\ar[dl]^t \\
		S
	}
$$
In particular, if $\alpha\in\Aut_k(S)$ and $t:=\alpha\circ\nu\colon S^\nu\to S$, then
$\alpha^\nu:=t^\nu\colon S^\nu\to S^\nu$ is the unique morphism making \eqref{eqn:normalization-square} commute.
Uniquenesses forces $\id_S^\nu=\id_{S^\nu}$ and $(\beta^{-1})^\nu\circ\alpha^\nu=(\beta^{-1}\circ\alpha)^\nu$ for each $\alpha,\beta\in\Aut(S)$.  Also, if $\alpha\in\Aut(S)$, then $\alpha^\nu\in\Aut(S^\nu)$ since $(\alpha^{-1})^\nu\circ\alpha^\nu=\id_{S^\nu}=\alpha^\nu\circ(\alpha^{-1})^\nu$.  Therefore $\alpha\mapsto\alpha^\nu$ gives a group homomorphism $\Aut(S)\to\Aut(S^\nu)$.
\end{proof}

\begin{proof}
\end{proof}

\begin{prop}\label{prop:smooth-completion-and-automorphisms}
Let $k$ be a field and $S$ be a smooth connected $k$-curve.  Let $S\to S^\kappa$ be the smooth completion of $S^\nu$.  There is a unique homomorphism $\kappa^*\colon\Aut_k(S)\to\Aut_k(S^\kappa)$ given by $\alpha\mapsto\alpha^\kappa$ and making
\begin{equation}
	\xymatrix{
		S^\kappa\ar[d]_\kappa & S^\kappa\ar[l]_{\alpha^\kappa}\ar[d]^\kappa \\
		S & S\ar[l]_{\alpha}
	}
\end{equation}
commute for every $\alpha\in\Aut_k(S)$.
\end{prop}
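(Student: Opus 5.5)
The statement is garbled as written: it says $S\to S^\kappa$ is the smooth completion of $S^\nu$, yet the diagram draws $\kappa$ going from $S^\kappa$ to $S$. I read it as intended: $\kappa\colon S\hookrightarrow S^\kappa$ is the open immersion of the smooth connected curve $S$ into its smooth completion, and the square commutes with the vertical arrows reversed, i.e.\ $\alpha^\kappa\circ\kappa=\kappa\circ\alpha$.

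The plan is to imitate the proof of Proposition~\ref{prop:normalization-and-automorphisms}, replacing the universal property of normalization by the extension property of the smooth completion. The key fact I will invoke is the following. Let $C$ be a smooth curve over $k$ and $C'$ a proper curve over $k$. Then every $k$-morphism from a dense open subset of $C$ to $C'$ extends uniquely to a $k$-morphism $C\to C'$. This is the valuative criterion of properness applied at the finitely many missing points, whose local rings are discrete valuation rings since $C$ is regular; see Liu, Prop.~4.1.16 / Cor.~4.4.? or Hartshorne~I.6. Uniqueness holds because $C'$ is separated and the open set is dense in the reduced scheme $C$.

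First, for $\alpha\in\Aut_k(S)$, consider the morphism $t:=\kappa\circ\alpha\colon S\to S^\kappa$. Since $S$ is a dense open subset of the smooth curve $S^\kappa$ and $S^\kappa$ is proper, the key fact gives a unique $k$-morphism $\alpha^\kappa\colon S^\kappa\to S^\kappa$ with $\alpha^\kappa\circ\kappa=\kappa\circ\alpha$. This gives existence and uniqueness of $\alpha^\kappa$ making the square commute.

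Second, functoriality follows from uniqueness exactly as before.
\begin{itemize}
\item $\id_S^\kappa$ and $\id_{S^\kappa}$ both extend $\kappa$, so $\id_S^\kappa=\id_{S^\kappa}$.
\item For $\alpha,\beta\in\Aut_k(S)$, both $\alpha^\kappa\circ\beta^\kappa$ and $(\alpha\circ\beta)^\kappa$ extend $\kappa\circ\alpha\circ\beta$, so they are equal.
\end{itemize}
Taking $\beta=\alpha^{-1}$ shows that $\alpha^\kappa$ is an automorphism with inverse $(\alpha^{-1})^\kappa$. Hence $\alpha\mapsto\alpha^\kappa$ is a group homomorphism $\Aut_k(S)\to\Aut_k(S^\kappa)$, and it is unique because each $\alpha^\kappa$ is forced.

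The only real obstacle is justifying the extension property cleanly. That requires:
\begin{itemize}
\item that the smooth completion exists and is a proper smooth curve containing $S$ as a dense open subset, which I take as part of its definition;
\item that the complement $S^\kappa\ssm S$ is a finite set of regular closed points, which holds since $S^\kappa$ is smooth of dimension one.
\end{itemize}
Given these, the valuative criterion gives the extension at each missing point, and separatedness of $S^\kappa$ makes it unique.
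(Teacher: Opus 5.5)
Your proof is correct and follows essentially the same route as the paper. The paper likewise views $\alpha$ as a birational self-map of $S^\kappa$ and extends it uniquely to $\alpha^\kappa$ using that $S^\kappa$ is smooth and complete (the same extension lemma you derive from the valuative criterion and separatedness); it then deduces compatibility with identities, composition, and inverses purely from uniqueness, and your reading of the garbled diagram, with $\kappa\colon S\hookrightarrow S^\kappa$ and $\alpha^\kappa\circ\kappa=\kappa\circ\alpha$, is the intended one.
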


\begin{proof}
Suppose $\alpha,\beta\in\Aut_k(S)$.  Observe that $\alpha$ represents a unique birational $k$-map $[\alpha]\colon S^\kappa\dashto S^\kappa$ since $S,S^\kappa$ are birational.  Moreover, $[\alpha]$ extends uniquely to a $k$-morphism $\alpha^\kappa\in S^\kappa\to S^\kappa$ since $S^\kappa$ is smooth and complete (see \cite[Ch.~2, Prop.~6.8]{hartshorne1977algebraic} or \cite[\href{https://stacks.math.columbia.edu/tag/0BXZ}{Lemma 0BXZ}]{stacks-project}).
Uniquenesses forces $\id_S^\kappa=\id_{S^\kappa}$ as well as $(\beta^{-1})^\kappa\circ\alpha^\kappa=(\beta^{-1}\circ\alpha)^\kappa$ and $\alpha^\kappa\in\Aut_k(S^\kappa)$.  Therefore
 $\alpha\mapsto\alpha^\kappa$ gives a group homomorphism $\Aut_k(S)\to\Aut_k(S^\kappa)$.
\end{proof}


\vfil
\subsection{Appendix: Scalar Base Change}\label{sec:scalar-base-change}

\begin{prop}\label{prop:degree-ell-base-change-leaves-mod-ell-reduction-unchanged}
Let $\Fq$ be a finite field characteristic $p$ and $\ell\neq p$ be a rational prime.
Let $S$ be a separated $\Fq$-scheme of finite type and $S_n$ be its base change to $\F_{q^n}$.  Then $S=S_1$ and
$$
	\zeta(S_\ell,T)\equiv\zeta(S_1,T)\bmod\ell.
$$
\end{prop}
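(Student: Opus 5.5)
The plan is to compare Euler products directly, as in the naive proof in Section~\ref{sec:naive-proof}. The identity $S=S_1$ holds by definition, since base change along $\Fq\to\Fq$ is the identity. For the congruence, write
$$
	\zeta(S_\ell,T)=\prod_{w\in|S_\ell|}(1-T^{\deg(w)})^{-1},
$$
where $\deg(w)$ is the degree of the residue field of $w$ over $\F_{q^\ell}$. Group the factors according to the image $x\in|S|$ of $w$ under the projection $S_\ell\to S$. This gives $\zeta(S_\ell,T)=\prod_{x\in|S|}L_x(T)^{-1}$, where $L_x(T)$ is the product of $1-T^{\deg(w)}$ over the points $w$ lying above $x$. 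It then suffices to show $L_x(T)\equiv 1-T^{\deg(x)}\bmod\ell$ for each $x$.

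The key step is to describe the fibre over $x$. Let $d:=\deg(x)$, so the residue field is $\kappa(x)\cong\F_{q^d}$. The fibre of $S_\ell\to S$ over $x$ is $\mathrm{Spec}(\F_{q^d}\otimes_{\Fq}\F_{q^\ell})$. By the standard description of tensor products of finite fields, this ring is a product of $\gcd(d,\ell)$ copies of $\F_{q^{\lcm(d,\ell)}}$. Each factor has degree $\lcm(d,\ell)/\ell=d/\gcd(d,\ell)$ over $\F_{q^\ell}$. Equivalently, one can count the orbits of $\Gal(\Fqbar/\F_{q^\ell})$ on the $d$ geometric points of $x$; this counting is the part I would write out carefully. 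Either way,
$$
	L_x(T)=\bigl(1-T^{d/\gcd(d,\ell)}\bigr)^{\gcd(d,\ell)}.
$$

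Since $\ell$ is prime, there are two cases. If $\ell\nmid d$, the fibre is a single point of degree $d$ and $L_x(T)=1-T^d$ exactly. If $\ell\mid d$, then $L_x(T)=(1-T^{d/\ell})^\ell\equiv 1-T^{d}\bmod\ell$ by the Frobenius congruence $(a+b)^\ell\equiv a^\ell+b^\ell$. This is the same observation used to derive \eqref{eqn:euler-congruence}.

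Multiplying over $x\in|S|$ gives the congruence. This is legitimate because, for each $n$, only finitely many $x$ have $\deg(x)\le n$, so the products converge $T$-adically. Moreover, congruences between power series in $1+T\Z[[T]]$ are preserved under products and inverses. The only real obstacle is the fibre computation in the second paragraph; the rest is formal. The hypothesis $\ell\neq p$ is not needed for this argument.
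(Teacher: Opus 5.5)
Your proof is correct, but it takes a different route from the paper.

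The paper argues cohomologically. It applies Theorem~\ref{thm:zeta-modulo-ell-via-cohomology} to write $\zeta(S_n,T)\bmod\ell$ as the alternating product of $\det(1-\phi^n\,T\mid H^i_c(\bar{S},\Fell))$. It then factors the $n=1$ polynomials over $\Fellbar$ as $\prod_j(1-\alpha_{i,j}T)$. Finally, it notes that $\prod_j(1-\alpha_{i,j}^\ell T)$ is obtained by applying the Frobenius of $\Fellbar$ to the coefficients, so it is unchanged because those coefficients lie in $\Fell$.

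You instead work point by point in the Euler product. A closed point of degree $d$ splits in $S_\ell$ into $\gcd(d,\ell)$ points of degree $d/\gcd(d,\ell)$, and $(1-T^{d/\ell})^\ell\equiv 1-T^d\bmod\ell$. This is the same mechanism as the naive proof of Section~\ref{sec:naive-proof}.

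What your version buys:
\begin{itemize}
\item It is elementary and bypasses the trace formula and the torsion bookkeeping behind Theorem~\ref{thm:zeta-modulo-ell-via-cohomology}.
\item As you observe, it does not need $\ell\neq p$.
\item It makes explicit the splitting of closed points that the paper uses anyway in proving part (1) of the proposition on $\zeta(Z,T)^{-1}$ in Section~\ref{sec:main-application}.
\end{itemize}

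The paper's version instead explains the phenomenon structurally: replacing $\phi$ by $\phi^\ell$ acts on the mod-$\ell$ Frobenius eigenvalues through $\Gal(\Fellbar/\Fell)$. This fits the cohomological theme of Section~\ref{sec:cohomological-proof}, at the cost of the hypothesis $\ell\neq p$.
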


\begin{proof}
Let $\phi\in\Gal(\Fqbar/\Fq)$ be the geometric Frobenius and
$$
	\zeta_i(S_n,T) := \det(1 - \phi^n\,T\mid H^i_c(\bar{S},\Fell))\in 1+T\cdot\Fell[T]
$$
so that Theorem~\ref{thm:zeta-modulo-ell-via-cohomology} implies
\begin{equation}\label{eqn:base-change-zeta-as-product}
	\zeta(S_n,T)
	\equiv
	\prod_i\zeta_i(S_n,T)^{(-1)^{i+1}}\bmod\ell.
\end{equation}
Let $d_i:=\deg(\zeta_i(S_1,T))$ and
$$
	\zeta_i(S_1,T)=\prod_{j=1}^{d_i}(1-\alpha_{i,j}T)\in\Fellbar[T]
$$
be a factorization.  Observe that
\begin{equation}\label{eqn:congruence-of-factors}
	\zeta_i(S_\ell,T)
	\equiv \prod_{j=1}^{d_i}(1-\alpha_{i,j}^\ell T)
	\equiv \prod_{j=1}^{d_i}(1-\alpha_{i,j} T)
	\equiv \zeta_i(S_1,T)
	\bmod\ell
\end{equation}
since $\psi_i(S_\ell,T)\in\Fell[T]$.  Then \eqref{eqn:base-change-zeta-as-product} and \eqref{eqn:congruence-of-factors} imply
$$
	\zeta(S_\ell,T)
	\equiv
	\prod_i\zeta_i(S_\ell,T)^{(-1)^{i+1}}
	\equiv
	\prod_i\zeta_i(S_1,T)^{(-1)^{i+1}}
	\equiv
	\zeta(S_1,T)
	\bmod\ell
$$
as desired.
\end{proof}


\vfil
\subsection{Appendix: Cyclotomic Factorization}\label{sec:cyclotomic-factorization}

\begin{prop}\label{prop:common-cyclotomic-factorizations}
Let $\ell\in\N$ be a prime, $\calD,\calE$ be finite submultisets of $\N_{\ell'}:=\N\ssm\ell\N$, and
$$
	\psi_\calD:=\prod_{d\in\calD}(1-T^{d})
	\text{ and }
	\psi_{\calE}:=\prod_{e\in\calE}(1-T^{e})
$$
If $\psi_\calD=\psi_E$, then $\calD=\calE$ as multisets.
\end{prop}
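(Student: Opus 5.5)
We need to prove: if $\prod_{d\in\calD}(1-T^d)=\prod_{e\in\calE}(1-T^e)$, with all elements prime to $\ell$, then $\calD=\calE$ as multisets. The statement reads $\psi_\calD=\psi_E$; I take the equality to hold in $\Fell[T]$, since that is how it is used in Proposition~\ref{prop:Z(Z,T)^{-1}-modulo-ell}.

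The plan is to compare multiplicities of roots of unity in $\Fellbar$. Because each $d\in\calD$ is prime to $\ell$, the derivative of $T^d-1$ is $dT^{d-1}$, which does not vanish at any nonzero point. Hence $T^d-1$ is separable over $\Fell$, and its roots are exactly the $d$ distinct $d$th roots of unity in $\Fellbar^\times$. For a root of unity $\omega\in\Fellbar^\times$ of exact order $m$ (necessarily $\ell\nmid m$), the multiplicity of $\omega$ as a root of $\psi_\calD$ is therefore
$$
	N_\calD(m):=\#\{d\in\calD: m\mid d\},
$$
counted with multiplicity in $\calD$. The same holds for $\calE$, so the equality $\psi_\calD=\psi_\calE$ gives $N_\calD(m)=N_\calE(m)$ for every $m\in\N_{\ell'}$. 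Here we use that for each such $m$ there exists an element of exact order $m$ in $\Fellbar^\times$, because $m\mid \ell^k-1$ for some $k$.

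Next, we recover the multiset from the function $N$ by downward induction. Let $c_\calD(n)$ denote the multiplicity of $n$ in $\calD$. Then $N_\calD(m)=\sum_{n\in\N_{\ell'},\,m\mid n}c_\calD(n)$, and this is a finite sum. Suppose $\calD\neq\calE$, and let $n$ be the largest integer with $c_\calD(n)\neq c_\calE(n)$. Every proper multiple $n'$ of $n$ satisfies $n'>n$, so $c_\calD(n')=c_\calE(n')$. Subtracting the two expansions gives $c_\calD(n)-c_\calE(n)=N_\calD(n)-N_\calE(n)=0$, which is a contradiction. Equivalently, one can apply Möbius inversion on the finite divisibility poset.

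The step needing the most care is the first one: the root-multiplicity computation depends on separability of $T^d-1$ in characteristic $\ell$. This is precisely where the hypothesis $\calD,\calE\subseteq\N_{\ell'}$ is used. Without it the conclusion fails, since $(1-T)^\ell=1-T^\ell$ in $\Fell[T]$. The remaining steps are routine.
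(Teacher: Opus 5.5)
Your proof is correct and is essentially the paper's argument: both rest on the separability of $T^d-1$ over $\Fell$ for $\ell\nmid d$ (the paper phrases this as pairwise coprimality of the cyclotomic polynomials $\psi_m,\psi_n$ for distinct $m,n\in\N_{\ell'}$), and both work downward from the largest index. The difference is only bookkeeping. The paper uses the cyclotomic factors just to detect $\max\calD=\max\calE$, then cancels $1-T^d$ and inducts on $|\calD|$, whereas you record the full root multiplicities $N_\calD(m)$ and invert the divisor-sum relation; you implicitly use that the largest $n$ with $c_\calD(n)\neq c_\calE(n)$ lies in $\calD\cup\calE\subseteq\N_{\ell'}$, which is what lets you invoke $N_\calD(n)=N_\calE(n)$.
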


\begin{proof}
We induct on $|\calD|,|\calE|$.
For the base case, observe $\calD=\emptyset$ if and only if $\calE=\emptyset$.
Suppose that $\calD,\calE$ are both nonvoid and that $\psi_{\calD}=\psi_{\calE}$.  We show that $d:=\max(\calD)$ equals $e:=\max(\calE)$ and induct.

Let $\psi_n\in\F_r[T]$ be the cyclotomic polynomial
$$
	\psi_n:=\prod_{m\mid n}(1-T^m)^{\mu(n/m)}.
$$
Observe that $\psi_m,\psi_n$ are coprime when $m,n\in\N_{\ell'}$ are distinct, hence
$$
	d=\max\{n\in\N_{r'}:\gcd(\psi_n,\psi_{\calD})\neq 1\}.
$$
Deduce $d\geq\max(\calE)$ since $\psi_d\mid\psi_{\calE}=\psi_{\calD}$ for $e\in\calE$.
A symmetric argument implies that 
$$
	e=\max(\calE)\geq\max(\calD)=d,
$$
hence $d=e$.

Let $\calD':=\calD\ssm\{d\}$ and $\calE':=\calE\ssm\{e\}$.
Observe that the identities
$$
	\psi_{\calD'}\cdot(1-T^d)
	=
	\psi_{\calD}
	=
	\psi_{\calE}
	=
	\psi_{\calE'}\cdot(1-T^e)
$$
imply $\psi_{\calD'}=\psi_{\calE'}$.  Deduce $\calD'=\calE'$ as multisets (by induction) and $\calD=\calE$ as multisets.
\end{proof}


\vfil
\bibliography{zeta-functions-modulo-ell.bib}
\bibliographystyle{plain}


\end{document}